\def\'#1{\ifx#1i{\accent"13 \i}\else{\accent"13 #1}\fi}
\newtheorem{theorem}{Theorem}[section]
\newtheorem{corollary}[theorem]{Corollary}
\newtheorem{lemma}[theorem]{Lemma}
\title{On the 4-girth-thickness of the line graph of the complete graph}
\author{Christian Rubio-Montiel \footnotemark[1]}
\begin{document}
\maketitle
\def\thefootnote{\fnsymbol{footnote}}
\footnotetext[1]{Divisi{\' o}n de Matem{\' a}ticas e Ingenier{\' i}a, FES Acatl{\' a}n, Universidad Nacional Aut{\' o}noma de M{\' e}xico, 53150, Naucalpan, Mexico, {\tt christian.rubio@apolo.acatlan.unam.mx}.}

\renewcommand{\thefootnote}{\arabic{footnote}}

\begin{abstract}
The $g$-girth-thickness $\theta(g,G)$ of a graph $G$ is the minimum number of planar subgraphs of girth at least $g$ whose union is $G$. In this note, we give the $4$-girth-thickness $\theta(4,L(K_n))$ of the line graph of the complete graph $L(K_n)$ when $n$ is even. We also give the minimum number of subgraphs of $L(K_n)$, which are of girth at least $4$ and embeddable on the projective plane, whose union is $L(K_n)$.
\end{abstract}
\textbf{Keywords:} girth-thickness, $S$-thickness, planar decomposition, line graph, token graph.

\textbf{2010 Mathematics Subject Classification:} 05C10.

	
\section{Introduction}
The \emph{thickness} $\theta(G)$ of a graph $G$ is the minimum number of elements in any partition of $E(G)$ such that the induced subgraph of each part is a planar graph. Equivalently, $\theta(G)$ is defined as the minimum number of planar subgraphs whose union is $G$.

The thickness has draw the attention of several researchers since its introduction in the 60s \cite{MR0157372} because it is an NP-hard problem \cite{MR684270} and it has many applications, for instance, in the design of circuits \cite{MR1079374}, in the Ringel's earth-moon problem \cite{MR1735339} and to bound the achromatic numbers of planar graphs \cite{araujo2017complete}, see the survey \cite{MR1617664}.

Only some exact results are known, for example, when $G$ is a complete graph \cite{MR0460162,MR0164339,MR0186573}, a hypercube \cite{MR0211901}, or a complete multipartite graph \cite{MR0158388,MR0229545,MR3243852,MR3610769}. And some generalizations of the thickness also have been studied such that the outerthickness $\theta_o$, defined similarly but with outerplanar instead of planar \cite{MR1100049}, and the $S$-thickness $\theta_S$, considering the thickness on a surfaces $S$ instead of the plane \cite{MR0245475}.

The \emph{$g$-girth-thickness} $\theta(g,G)$ of a graph $G$, introduced in \cite{rubio20174}, is the minimum number of elements in any partition of $E(G)$ such that the induced subgraphs of each part is a planar graph of girth at least $g$. The $g$-girth-thickness is the usual thickness when $g=3$ and it is the \emph{arboricity number} when $g=\infty$. Recall that the \emph{girth} of a graph is the size of its shortest cycle or $\infty$ if it is acyclic. 

Exact results also are known when $g>3$ and finite, for instance, the $4$-girth-thickness of the complete graph \cite{casta2017,GY,rubio20174}, the $4$-girth-thickness of the complete multipartite graph \cite{GY,rubio20175} and the $6$-girth-thickness of the complete graph \cite{casta2017}. Owing to the fact that the hypercube and the complete bipartite are triangle-free graphs, their thickness equal their 4-girth-thickness which were calculate in \cite{MR0211901} and partially calculate in \cite{MR0158388,MR0229545}, respectively.

We define the $S$-$g$-girth-thickness $\theta_S(g,G)$ of a graph $G$ as the minimum number of subgraphs embeddable on a surface $S$ of girth at least $g$ whose union is $G$. Of course, if $G$ has girth $g$ then $\theta_S(g,G)$ is $\theta_S(G)$ as in the case of $K_{n,n}$ for $g=4$, see \cite{MR0245475}.

In this note, we obtain the $4$-girth-thickness $\theta(4,L(K_n))$ of the line graph $L(K_n)$ of the complete graph $K_n$ when $n$ is even. To achieve this, in Section \ref{Section2} we recall some properties about token graphs $F_k(G)$. In Section \ref{Section3}, we determine $\theta(4,F_2(G))$ when $G$ contains a factorization into Hamiltonian paths, in particular \[\theta(4,L(K_n))=\frac{n}{2}\textrm{ and }\theta(4,F_2(K_{n-1,n}))=\frac{n}{2}\] for $n$ even. Finally, in Section \ref{Section4}, we determine $\theta_S(4,F_2(G))$ when $S$ is the projective plane and $G$ contains a Hamiltonian-factorization, in consequence \[\theta_S(4,L(K_n))=\left\lfloor \frac{n}{2}\right\rfloor\textrm{ and }\theta(4,F_2(K_{2n,2n}))=n\] for all $n$.


\section{Token graphs}\label{Section2}

Consider the following graph $F_k(G)$ called the \emph{$k$-token graph} introduced in \cite{MR2912660}, for given an integer $k\geq 1$ and a graph $G$ of order $n$. The vertex set $V(F_k(G))$ is the family of $k$ subsets of $V(G)$, therefore $|V(F_k(G))|=\binom{n}{k}$. Two such $k$-subsets $X$ and $Y$ are adjacent if its symmetric difference $X\triangle Y=\{x,y\}$ such that $x\in X$, $y\in Y$ and $xy\in E(G)$. The size of $F_k(G)$ is $\binom{n-2}{k-1}|E(G)|$, see \cite{MR2912660}. An example of a $2$-token graph is showed in Figure \ref{Fig1}, which is the $F_2(P_6)$.

\begin{figure}[htbp]
\begin{center}	
\includegraphics{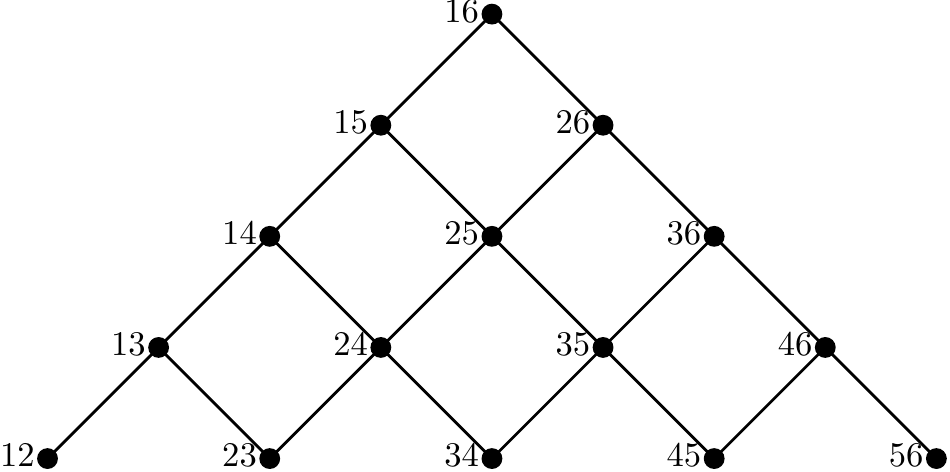}
\caption{\label{Fig1} The $2$-token graph of the path of order $6$.}
\end{center}
\end{figure}

The $2$-token graph $F_2(K_n)$ of the complete graph $K_n$ is the line graph $L(K_n)$ of the complete graph $K_n$ because each pair of incident edges $xz$ and $zy$ has symmetric difference the set $\{x,y\}$ which is the edge $xy$ of the complete graph. In general, the Johnson graph $J(n,k)\cong F_k(K_n)$ owing to the fact that it is the graph whose vertices are the $k$-subsets of an $n$-set, where two such subsets $X$ and $Y$ are adjacent whenever $|X\cap Y|= k-1$.

In \cite{MR3666665}, the authors remark that the $2$-token graph $F_2(P_n)$ of the path graph $P_n$ of $n$ vertices is planar of girth at least $4$ for every $n$.

Now, we prove that an edge-partition of a graph $G$ induces an edge partition of $F_k(G)$.

\begin{lemma}\label{lemma1}
Let $G$ be a non empty graph and $P=\{E_1,\dots,E_l\}$ an edge-partition of $G$. Then the set $\{E'_1,\dots,E'_l\}$ is an edge-partition of $F_k(G)$ where $E'_i=E(F_k(G[E_i]))$ for all $i\in\{1,\dots,l\}$.
\end{lemma}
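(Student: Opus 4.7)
The plan is to exhibit a natural map from the edges of $F_k(G)$ to the edges of $G$ and show that the sets $E'_i$ are exactly the preimages of the $E_i$ under this map. First I would clarify the convention: for the partition to make literal sense, interpret $G[E_i]$ as the spanning subgraph of $G$ with edge set $E_i$, so that $V(F_k(G[E_i]))=\binom{V(G)}{k}=V(F_k(G))$ for every $i$; otherwise $F_k(G[E_i])$ would have fewer vertices than $F_k(G)$ and $\{E'_1,\dots,E'_l\}$ could not be an edge partition in the usual sense.

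Next I would recall the defining description of adjacency in $F_k(G)$: two $k$-subsets $X,Y\subseteq V(G)$ form an edge of $F_k(G)$ if and only if $X\triangle Y=\{x,y\}$ with $xy\in E(G)$. This gives a well-defined map
\[
\varphi\colon E(F_k(G))\longrightarrow E(G),\qquad \varphi(XY)=xy \textrm{ where } \{x,y\}=X\triangle Y.
\]
The key observation is that for any spanning subgraph $H$ of $G$, an edge $XY$ belongs to $E(F_k(H))$ if and only if $\varphi(XY)\in E(H)$, because the adjacency condition in $F_k(H)$ requires exactly that the (unique) symmetric-difference edge lies in $E(H)$. Applying this to $H=G[E_i]$ gives $E'_i=E(F_k(G[E_i]))=\varphi^{-1}(E_i)$.

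The rest is formal. Since $\{E_1,\dots,E_l\}$ partitions $E(G)$, the preimages $\{\varphi^{-1}(E_1),\dots,\varphi^{-1}(E_l)\}$ partition $E(F_k(G))$: covering follows because every edge $XY\in E(F_k(G))$ has $\varphi(XY)$ lying in some $E_i$, and pairwise disjointness follows because $\varphi(XY)$ lies in exactly one $E_i$. Thus $\{E'_1,\dots,E'_l\}$ is an edge partition of $F_k(G)$.

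There is no real obstacle here; the only subtlety is the mild bookkeeping about the vertex set of $F_k(G[E_i])$, which is handled by the spanning-subgraph convention. Everything else reduces to the fact that each edge of $F_k(G)$ has a canonically associated edge of $G$, so any edge partition of $G$ lifts functorially to an edge partition of $F_k(G)$.
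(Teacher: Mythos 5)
Your proof is correct and takes essentially the same approach as the paper's: both rest on the observation that each edge $XY$ of $F_k(G)$ determines a unique edge $xy$ of $G$ via $X\triangle Y=\{x,y\}$, and hence lies in exactly one $E'_j$; you merely package this as a map $\varphi$ and take preimages, and you also make explicit the spanning-subgraph convention for $G[E_i]$. The only substantive difference is that the paper additionally verifies each $E'_i$ is nonempty (which requires $G$ to have order at least $k+1$), a point you omit but which matters only if one insists that the parts of a partition be nonempty.
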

\begin{proof}
Let $XY$ be an edge of $F_k(G)$, that is, $X$ and $Y$ are $k$-subsets of $V(G)$ such that for some $x\in X$ and some $y\in Y$, the symmetric difference of $X$ and $Y$ is $\{x,y\}$, and $xy$ is an edge of $G$. Let $j$ the unique index in the set $\{1,\dots,l\}$ such that $xy\in E_j$. Thus $xy$ is an edge of $G[E_j]$ and in consequence $XY$ is an edge of $F_k(G[E_j]) = E'_j$. Then $XY\in E'_j$. Moreover, if $XY \in E_i$ for some $i\in\{1,\dots,l\}$, then $xy\in E(G[E_i]) = E_i$. But $P = \{E_1,\dots,E_l\}$ is an edge-partition of $G$, and $xy\in E_j$, so $i=j$. Therefore each edge of $F_k(G)$ is in a unique element of $\{E'_1,\dots,E'_l\}$. In order to guarantee that every $E'_i$ is a non empty set, we need that $G$ has order at least $k+1$. In that case, if $xy\in E_i$ and $U = \{g_1,\dots,g_{k-1}\} \subseteq V(G)\setminus\{x,y\}$, then $X' = U \cup \{x\}$ and $Y' = U \cup \{y\}$ are two $k$-subsets of $G[E_i]$ such that its symmetric difference is $\{x,y\}$, and then $E'_i \neq \emptyset$, because $X'Y'\in E'_i$.
\end{proof}

\section{Determining $\theta (4,L(K_n))$ for $n$ even}\label{Section3}

A planar graph of $n$ vertices and girth at least $4$ has at most $2(n-2)$ edges for $n\geq 4$ and at most $n-1$, otherwise. In consequence, the $4$-girth-thickness $\theta(4,G)$ of a graph $G$ is at least $\left\lceil \frac{|E(G)|}{2(n-2)}\right\rceil$ for $n\geq 4$ and at least $\left\lceil \frac{|E(G)|}{n-1}\right\rceil$, otherwise.

Therefore we have the following theorem.

\begin{theorem}
If $G$ contains a factorization into $k$ Hamiltonian paths, then $\theta(4,F_2(G))=k$.
\end{theorem}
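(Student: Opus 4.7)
Write $n=|V(G)|$. The hypothesis forces $|E(G)|=k(n-1)$ and, since $G\subseteq K_n$, the bound $k(n-1)\le\binom{n}{2}$ immediately gives $k\le n/2$. The upper bound $\theta(4,F_2(G))\le k$ will follow directly from Lemma~\ref{lemma1}, while the matching lower bound will come from the Euler-type edge bound $|E|\le 2(|V|-2)$ for planar $4$-girth graphs stated in the opening paragraph of this section. The only delicate point is the lower bound: the edge-count ratio sits just below $k$, so the ceiling rounds up to $k$ only once the saturation constraint $k\le n/2$ is invoked.

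\textbf{Upper bound.} Let $\{E_1,\dots,E_k\}$ be the Hamiltonian path factorization of $G$, so that each $G[E_i]\cong P_n$. Lemma~\ref{lemma1} then yields an edge partition $\{E'_1,\dots,E'_k\}$ of $F_2(G)$ with $E'_i=E(F_2(P_n))$. The remark of \cite{MR3666665} recalled above says $F_2(P_n)$ is planar of girth at least $4$ for every $n$, so each part of this partition is planar and $4$-girth, giving $\theta(4,F_2(G))\le k$.

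\textbf{Lower bound.} The graph $F_2(G)$ has $N=\binom{n}{2}$ vertices and $(n-2)|E(G)|=k(n-1)(n-2)$ edges. For $n\ge 4$ one has $N\ge 6\ge 4$, so the opening paragraph of this section gives
$$\theta(4,F_2(G))\;\ge\;\left\lceil\frac{k(n-1)(n-2)}{n(n-1)-4}\right\rceil,$$
and it suffices to show the argument of the ceiling exceeds $k-1$. Clearing denominators, this reduces to $n^2-n-4>2k(n-3)$. Invoking $k\le n/2$ yields $2k(n-3)\le n(n-3)=n^2-3n$, and $n^2-3n<n^2-n-4$ exactly when $n>2$. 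The only residual small case is $n=3$, $k=1$, $G=P_3$, and there $F_2(G)\cong P_3$ is trivially planar of girth at least $4$, so $\theta(4,F_2(G))=1=k$ holds by inspection. This is where the plan really pays off: the whole argument collapses to Lemma~\ref{lemma1} on one side and the single arithmetic inequality $n^2-n-4>2k(n-3)$, saved by $k\le n/2$, on the other.
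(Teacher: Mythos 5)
Your proof is correct and follows essentially the same route as the paper: the upper bound comes from Lemma~\ref{lemma1} together with the planarity and girth of $F_2(P_n)$, and the lower bound from the edge bound $2(N-2)$ for planar graphs of girth at least $4$ combined with $k\le n/2$; your inequality $n^2-n-4>2k(n-3)$ is exactly the paper's condition $0<k(2n-6)/(n^2-n-4)<1$ after clearing denominators. The only cosmetic difference is that the paper also records the degenerate case $G=K_2$ alongside $G=P_3$, which your argument omits.
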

\begin{proof}
For $G=K_2$ or $G=P_3$, it is easy to check that $\theta(4,F_2(G))=1$. Assume that $G$ is a graph of order $n\geq 4$ containing a factorization into Hamiltonian paths. Then $G$ has size $e=(n-1)k\leq \binom{n}{2}$, then $k\leq n/2$ and \[k<n/2+1+1/(n-3).\]
Since, the $2$-token graph $F_2(G)$ has order $\binom{n}{2}$ and size $(n-2)(n-1)k$, it follows that 
\[\theta (4,F_2(G))\geq \left\lceil \frac{(n-2)(n-1)k}{2(\binom{n}{2}-2)}\right\rceil = \left\lceil k-\frac{2nk-6k}{n^2-n-4} \right\rceil.\]
Because $k<\frac{n}{2}+1+\frac{1}{n-3}=\frac{n^2-n-4}{2n-6}$ then \[0<\frac{k(2n-6)}{n^2-n-4}<1\]
and we have 
\[\theta (4,F_2(G))\geq k.\]
By Lemma \ref{lemma1}, the partition of $k$ Hamiltonian paths $\{G_1,\dots,G_k\}$ of $G$ induces a partition of $F_2(G)$ into $k$ planar subgraphs of girth at least 4, $\{F_2(G_1),\dots,F(G_k)\}$ and the result follows.
\end{proof}
We have the following corollaries.
\begin{corollary}
If $n$ is even then $\theta(4,F_2(K_{n-1,n}))=n/2$.
\end{corollary}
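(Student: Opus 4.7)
The plan is to apply the preceding theorem with $G=K_{n-1,n}$, so the entire task reduces to exhibiting an edge-decomposition of $K_{n-1,n}$ into $n/2$ Hamiltonian paths whenever $n$ is even. Given such a factorization, the theorem immediately delivers $\theta(4,F_2(K_{n-1,n}))=n/2$.

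To produce the factorization, the first step is to invoke the classical Hamiltonian cycle decomposition of $K_{n,n}$ for even $n$: the graph is $n$-regular with $n^2$ edges, and it is a well-known fact that, for $n$ even, $K_{n,n}$ decomposes into $n/2$ edge-disjoint Hamiltonian cycles. A concrete witness, with parts $\{a_1,\dots,a_n\}$ and $\{b_1,\dots,b_n\}$, is given by the cycles $C_i=a_1 b_i a_2 b_{i+1}\cdots a_n b_{i+n-1}a_1$ (subscripts modulo $n$) taken for odd indices $i\in\{1,3,\dots,n-1\}$; a short check shows that each edge $a_j b_l$ lies in exactly one such $C_i$, since the two cycles among the $n$ candidates that contain $a_j b_l$ have indices of opposite parity.

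The second step is to pass from cycles to paths by deleting a single vertex. Choose any $a\in V(K_{n,n})$ and remove it; the remaining graph is $K_{n-1,n}$. Each Hamiltonian cycle of $K_{n,n}$ loses $a$ together with its two incident edges in that cycle and becomes a Hamiltonian path of $K_{n-1,n}$. The resulting $n/2$ Hamiltonian paths are edge-disjoint, and the count $\tfrac{n}{2}(2n-2)=n(n-1)=|E(K_{n-1,n})|$ confirms they exhaust every edge. This gives the desired factorization, and the previous theorem closes the argument.

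The only potential obstacle is producing the Hamiltonian cycle decomposition of $K_{n,n}$ for even $n$; this is classical, but it is worth either citing explicitly or verifying the parity-of-indices argument in the displayed construction. Everything else (the conversion from cycles to paths by vertex deletion and the edge count) is routine, so the main work of the proof really is the combinatorial existence statement about $K_{n,n}$.
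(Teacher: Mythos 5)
Your proof is correct and follows the route the paper intends: the corollary is a direct application of the preceding theorem once $K_{n-1,n}$ is factorized into $n/2$ Hamiltonian paths, and your construction of that factorization (deleting a vertex from the standard Hamiltonian cycle decomposition of $K_{n,n}$ for $n$ even, with the parity-of-indices check) correctly supplies the detail the paper leaves implicit.
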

\begin{corollary}
If $n$ is even then $\theta(4,L(K_n))=n/2$.
\end{corollary}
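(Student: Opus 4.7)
The plan is to derive this corollary as a direct specialization of the theorem just proved, so essentially all the work has already been done; the only nontrivial verification is the existence of the required factorization of $K_n$ when $n$ is even.

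First I would recall from Section~\ref{Section2} that the $2$-token graph of the complete graph coincides with its line graph, $F_2(K_n) = L(K_n)$. This reduces the corollary to computing $\theta(4, F_2(K_n)) = n/2$, which by the theorem is exactly the number of Hamiltonian paths in a path-factorization of $K_n$, provided such a factorization exists.

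Second I would exhibit a factorization of $K_n$ into $n/2$ Hamiltonian paths when $n$ is even. Writing $n = 2m$, this is classical: the Walecki construction decomposes $K_{2m+1}$ into $m$ edge-disjoint Hamiltonian cycles, and deleting any single vertex turns each cycle into a Hamiltonian path of the resulting $K_{2m}$. This yields the required partition of $E(K_n)$ into $m = n/2$ Hamiltonian paths. (One may equivalently place $\{0,1,\dots,n-1\}$ on a regular $(n-1)$-gon with one central vertex and take the rotations of a zig-zag Hamiltonian path.) A direct edge count $\binom{n}{2} = (n-1)(n/2)$ confirms that $n/2$ is the right number of paths.

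With this factorization in hand, the theorem yields
\[
\theta(4, L(K_n)) = \theta(4, F_2(K_n)) = \frac{n}{2}.
\]
The only substantive ingredient is the Hamiltonian-path factorization of $K_n$, which is a standard construction rather than a genuine obstacle; everything else is a direct substitution into the theorem.
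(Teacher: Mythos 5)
Your proposal is correct and follows exactly the route the paper intends: the corollary is stated as an immediate consequence of the theorem, using the identification $F_2(K_n)=L(K_n)$ from Section~\ref{Section2} and the classical factorization of $K_n$ ($n$ even) into $n/2$ Hamiltonian paths. You have merely made explicit the standard construction (Walecki/zig-zag) that the paper leaves implicit, which is a welcome addition but not a different argument.
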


\section{$\theta_S(4,L(K_n))$ when $S$ is the projective plane}\label{Section4}

Although the problem of finding the minimum number of planar graphs of girth at least $4$ into which the line graph of the complete graph can be decomposed remains partially solved, the corresponding problem can be solved for the surface called the projective plane. A similar proof provide the solution.

On one hand, a maximal graph of order $n$ and girth at least $4$ embeddable in the projective plane $S$ has size at most $2n-2$. On the other hand, since the $2$-token graph of a cycle is a graph embeddable in $S$ with girth $4$, see Figure \ref{Fig2} for a example, we can give the following theorem.

\begin{figure}[htbp]
\begin{center}	
\includegraphics{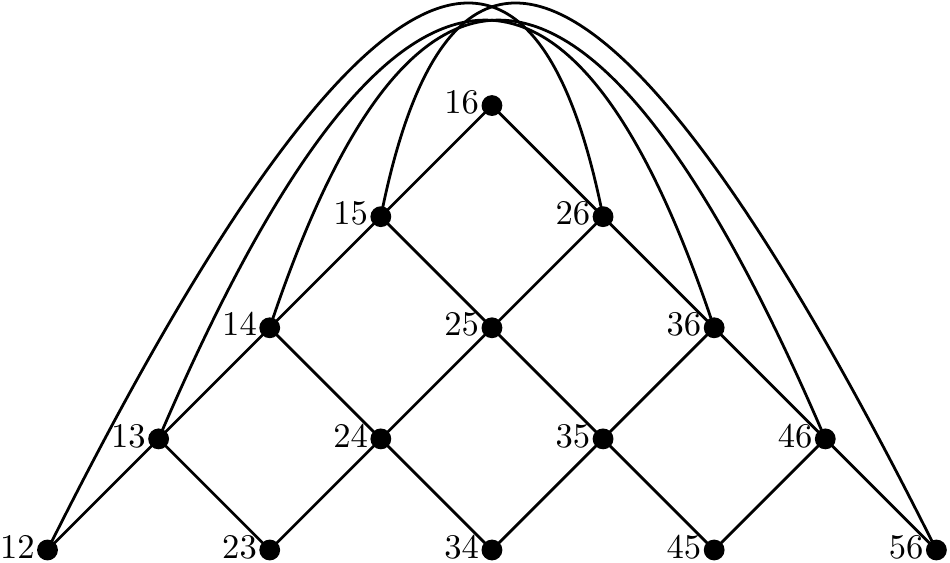}
\caption{\label{Fig2} The $2$-token graph of the cycle of order $6$.}
\end{center}
\end{figure}

\begin{theorem}
If $G$ is a graph of order $n\geq 4$ and contains a factorization into $k$ Hamiltonian cycles, then $\theta_S(4,F_2(G))=k$ when $S$ is the projective plane.
\end{theorem}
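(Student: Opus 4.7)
The plan is to imitate the proof of the preceding theorem, replacing Hamiltonian paths by Hamiltonian cycles, the planar girth-$4$ edge bound $2(n-2)$ by the corresponding projective-plane bound $2n-2$, and the auxiliary fact that $F_2(P_n)$ is planar with girth $\geq 4$ by its cyclic analogue that $F_2(C_n)$ embeds in the projective plane with girth $\geq 4$. The argument then splits naturally into a lower bound coming from edge counting and an upper bound coming from Lemma \ref{lemma1}.

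For the lower bound, I would observe that a Hamiltonian decomposition into $k$ cycles forces $|E(G)|=nk$, so by the formula for the size of a token graph, $F_2(G)$ has $\binom{n}{2}$ vertices and $(n-2)nk$ edges. Using the bound $2n-2$ on the number of edges of a girth-$\geq 4$ graph of order $n$ embeddable in the projective plane $S$, applied to $F_2(G)$, I would get
\[
\theta_S(4,F_2(G))\;\geq\; \left\lceil \frac{(n-2)nk}{2\binom{n}{2}-2} \right\rceil \;=\; \left\lceil \frac{n(n-2)k}{(n-2)(n+1)} \right\rceil \;=\; \left\lceil k-\frac{k}{n+1}\right\rceil.
\]
Since a Hamiltonian-cycle factorization requires $nk\leq \binom{n}{2}$, hence $k\leq (n-1)/2<n+1$, the correction term $k/(n+1)$ lies strictly in $(0,1)$ and the ceiling evaluates to $k$, giving $\theta_S(4,F_2(G))\geq k$.

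For the upper bound, I would invoke Lemma \ref{lemma1}: if $\{C^{(1)},\dots,C^{(k)}\}$ is a Hamiltonian-cycle factorization of $G$, then $\{F_2(C^{(1)}),\dots,F_2(C^{(k)})\}$ is an edge-partition of $F_2(G)$. Since each $F_2(C^{(i)})$ is isomorphic to $F_2(C_n)$, and the paper already asserts (with Figure \ref{Fig2} as evidence) that $F_2(C_n)$ embeds in the projective plane with girth $\geq 4$, this exhibits $F_2(G)$ as the union of $k$ such subgraphs, so $\theta_S(4,F_2(G))\leq k$. Combining both bounds yields equality. Finally, I would record the consequences $\theta_S(4,L(K_n))=\lfloor n/2\rfloor$ for all $n$ (using Walecki's Hamiltonian-cycle factorization of $K_n$ when $n$ is odd, together with the previous $n/2$ bound when $n$ is even via a near-factorization argument) and $\theta_S(4,F_2(K_{2n,2n}))=n$ (using that $K_{2n,2n}$ decomposes into $n$ Hamiltonian cycles).

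The main obstacle is the claimed projective-plane embedding of $F_2(C_n)$ with girth $\geq 4$: Figure \ref{Fig2} handles only $n=6$, so a general construction is required. I would produce one by viewing the vertices of $F_2(C_n)$ as pairs $\{i,j\}\subseteq \mathbb{Z}_n$ and drawing them in the projective plane using the natural $\mathbb{Z}_n$-rotational symmetry: the ``short chord'' edges (pairs differing by $1$) form an outer structure homeomorphic to the $2$-token graph of a path, which is planar with girth $4$, and the single wrap-around edge-orbit can be routed through the crosscap without creating a triangle, since in $F_2(C_n)$ the neighbors of any vertex $\{i,i+1\}$ are the four pairs $\{i\pm 1,i+1\}$ and $\{i,i+1\pm 1\}$, which are pairwise non-adjacent. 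Verifying the face structure to confirm girth $\geq 4$ and embeddability in $S$ is the only delicate point; everything else is a routine adaptation of the previous theorem.
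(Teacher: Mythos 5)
Your proof follows essentially the same route as the paper's: the identical edge-counting lower bound $\left\lceil \frac{(n-2)nk}{2\binom{n}{2}-2}\right\rceil$ (your simplification to $\left\lceil k-\frac{k}{n+1}\right\rceil$ is the same quantity, computed more cleanly) together with the upper bound from Lemma \ref{lemma1} applied to the Hamiltonian-cycle factorization. The one issue you flag --- that the girth-$4$ projective-plane embeddability of $F_2(C_n)$ for general $n$ needs an actual construction rather than the single example of Figure \ref{Fig2} --- is a gap the paper itself leaves implicit, so your treatment is, if anything, slightly more careful.
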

\begin{proof}
Let $G$ be a graph of order $n\geq 4$ containing a Hamiltonian-factorization, that is, a factorization into Hamiltonian cycles. Then $G$ has size $e=nk\leq \binom{n}{2}$, then $k\leq (n-1)/2$ and \[k<n+1+2/(n-2).\]
Since, the $2$-token graph $F_2(G)$ has order $\binom{n}{2}$ and size $(n-2)nk$, it follows that 
\[\theta_S(4,F_2(G))\geq \left\lceil \frac{(n-2)nk}{2\binom{n}{2}-2}\right\rceil = \left\lceil k-\frac{nk-2k}{n^2-n-2} \right\rceil.\]
Because $k<n+1+\frac{2}{n-2}=\frac{n^2-n-2}{n-2}$ then \[0<\frac{k(n-2)}{n^2-n-2}<1\]
and we have 
\[\theta_S (4,F_2(G))\geq k.\]
By Lemma \ref{lemma1}, the partition of $k$ Hamiltonian cycles $\{G_1,\dots,G_k\}$ of $G$ induces a partition of $F_2(G)$ into $k$ planar subgraphs of girth at least 4 embeddable in $S$, $\{F_2(G_1),\dots,F(G_k)\}$ and the result follows.
\end{proof}
We have the following corollaries.
\begin{corollary}
If $n$ is even then $\theta_S(4,F_2(K_{n,n}))=n/2$.
\end{corollary}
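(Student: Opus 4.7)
The plan is to invoke the preceding theorem with $G = K_{n,n}$. The graph $K_{n,n}$ has order $2n$, which is at least $4$ for every even $n \geq 2$, so the order hypothesis is satisfied. It then suffices to exhibit a factorization of $K_{n,n}$ into $k = n/2$ Hamiltonian cycles; the conclusion $\theta_S(4,F_2(K_{n,n})) = n/2$ will follow immediately.

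Producing such a Hamiltonian factorization is the heart of the argument, although the construction is classical. I would label the parts $A = \{a_0, \ldots, a_{n-1}\}$ and $B = \{b_0, \ldots, b_{n-1}\}$ with indices read modulo $n$, and for each $s \in \mathbb{Z}_n$ define the perfect matching $M_s = \{a_i b_{i+s} : i \in \mathbb{Z}_n\}$. These $n$ matchings partition $E(K_{n,n})$. Because $n$ is even, I can pair them as $C_i := M_{2i} \cup M_{2i+1}$ for $i = 0, 1, \ldots, n/2 - 1$, obtaining $n/2$ edge-disjoint $2$-regular bipartite spanning subgraphs whose union is $K_{n,n}$.

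The only step that genuinely needs checking is that each $C_i$ is a single Hamiltonian cycle, rather than a disjoint union of shorter cycles. Starting at $a_0$ and alternating edges of $M_{2i}$ and $M_{2i+1}$, the walk successively visits $a_0, b_{2i}, a_{n-1}, b_{2i-1}, a_{n-2}, \ldots$, with the $a$-index decreasing by $1$ at each return to $A$; hence all $n$ vertices of $A$ and all $n$ vertices of $B$ are traversed before the walk closes back at $a_0$. This confirms that $C_i$ is Hamiltonian, so $\{C_0, \ldots, C_{n/2-1}\}$ is a Hamiltonian-factorization of $K_{n,n}$, and the corollary then follows directly from the theorem.
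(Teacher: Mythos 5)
Your proof is correct and follows the same route the paper intends: the corollary is obtained by applying the preceding theorem to a factorization of $K_{n,n}$ (order $2n\geq 4$) into $n/2$ Hamiltonian cycles. The paper leaves this classical decomposition implicit, whereas you verify it explicitly via the matchings $M_s$; your check that each $M_{2i}\cup M_{2i+1}$ is a single $2n$-cycle is sound.
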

\begin{corollary}
For all $n$, we have that $\theta_S(4,L(K_n))=\left\lfloor \frac{n}{2}\right\rfloor$.
\end{corollary}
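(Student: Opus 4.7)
The plan is to split on the parity of $n$ (handling small cases by direct inspection). For odd $n\ge 5$ the result drops out immediately from the theorem just proved, since $K_n$ admits a classical Walecki decomposition into exactly $(n-1)/2$ Hamiltonian cycles. Taking $G=K_n$ and $k=(n-1)/2$ in the theorem gives $\theta_S(4,L(K_n))=\theta_S(4,F_2(K_n))=(n-1)/2=\lfloor n/2\rfloor$.

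For even $n\ge 4$ the graph $K_n$ has no Hamiltonian-factorization, so the theorem does not apply directly; instead I would prove the two inequalities separately. The upper bound is free: every planar graph embeds in the projective plane, hence $\theta_S(4,G)\le\theta(4,G)$ for every $G$, and combining with the corollary of Section \ref{Section3} yields $\theta_S(4,L(K_n))\le n/2$. For the lower bound I would repeat the Euler-formula counting used in the theorem of this section. Since $F_2(K_n)$ has order $\binom{n}{2}$ and size $(n-2)\binom{n}{2}$, and a projective-planar graph of order $m$ and girth at least $4$ has at most $2m-2$ edges,
\[
\theta_S(4,L(K_n))\ \ge\ \left\lceil\frac{(n-2)\binom{n}{2}}{2\binom{n}{2}-2}\right\rceil\ =\ \left\lceil\frac{n(n-1)}{2(n+1)}\right\rceil.
\]

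The only thing left is to check that this ceiling is exactly $n/2$ when $n$ is even. A one-line computation shows $\frac{n}{2}-\frac{n(n-1)}{2(n+1)}=\frac{n}{n+1}\in(0,1)$, so $\frac{n-2}{2}<\frac{n(n-1)}{2(n+1)}<\frac{n}{2}$, and since $n/2$ is an integer the ceiling equals $n/2=\lfloor n/2\rfloor$. There is no real obstacle here: the odd case is a corollary of the preceding theorem, and the even case is the same density bound as in Section \ref{Section3} applied with the projective-planar constant $2m-2$ in place of the planar constant $2(m-2)$; the mild care required is simply verifying, as above, that the ceiling evaluates to $n/2$ rather than $n/2-1$.
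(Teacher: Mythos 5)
Your argument is correct and is essentially the derivation the paper intends: the odd case follows from the theorem via Walecki's decomposition of $K_n$ into $(n-1)/2$ Hamiltonian cycles, and the even case combines the planar upper bound $\theta_S(4,L(K_n))\le\theta(4,L(K_n))=n/2$ with the same Euler-formula count (using $2m-2$ for projective-planar girth-$4$ graphs), whose ceiling you correctly verify to equal $n/2$. The one caveat is your appeal to ``small cases by direct inspection'': for $n=3$ one has $L(K_3)=K_3$, which cannot be covered by a single triangle-free subgraph, so $\theta_S(4,L(K_3))=2\neq 1=\lfloor 3/2\rfloor$; this is a defect of the corollary as stated for all $n$ rather than of your argument, but it means the claim (and any proof of it) should be restricted to $n\geq 4$ or the exceptional value noted.
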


\section*{Acknowledgments}
Part of the work was done during the Reuni{\' o}n de Optimizaci{\' o}n, Matem{\' a}ticas y Algoritmos ROMA 2017, held at Casa Rafael Galv{\' a}n,  Universidad Aut{\' o}noma de Metropolitana, Mexico City, Mexico on July 24--28, 2017. 

The author wishes to thank F. Esteban Contreras-Mendoza for his useful discussions. 

Research partially supported by PAPIIT of Mexico grant IN107218.

\end{document}